\documentclass[11pt,reqno]{amsart}

\usepackage[margin=0.88in]{geometry}
\usepackage{amsmath,amssymb,amsthm}
\usepackage{mathtools}
\usepackage{microtype}

\newcommand{\IBr}{\operatorname{IBr}}
\newcommand{\Irr}{\operatorname{Irr}}
\newcommand{\Lin}{\operatorname{Lin}}
\newcommand{\Syl}{\operatorname{Syl}}
\newcommand{\core}{\operatorname{core}}

\newtheorem{theorem}{Theorem}[section]
\newtheorem{lemma}[theorem]{Lemma}
\numberwithin{equation}{section}

\title{Brauer Character Degrees and Nilpotent Subgroups}
\author{Yong Yang}
\date{}

\subjclass[2020]{20C20, 20D10}
\keywords{Brauer character, nilpotent subgroup, solvable group, character degree, restriction}

\begin{document}

\begin{abstract}
We solve a question raised by Chen and Navarro concerning Brauer characters and nilpotent subgroups. Let $N\lhd G$, assume that $G/N$ is solvable, and let $N\le H\le G$ with $H/N$ nilpotent. We prove that for every irreducible $\ell$-Brauer character $\theta$ of $H$ there exists an irreducible $\ell$-Brauer character $\chi$ of $G$ such that $\theta$ is a constituent of $\chi_H$ and $\chi(1)$ divides $|G:H|\theta(1)$.
\end{abstract}

\maketitle

\section{Introduction}
Fix a prime $\ell$. For a finite group $X$, write $\IBr_\ell(X)$ for the set of irreducible $\ell$-Brauer characters of $X$.

Chen and Navarro proved that if $N\lhd G$, $G/N$ is solvable, and $H/N$ is abelian, then for every $\theta\in\IBr_\ell(H)$ there exists $\chi\in\IBr_\ell(G)$ such that
\[
\theta\le \chi_H \qquad\text{and}\qquad \chi(1)\mid |G:H|\theta(1).
\]
At the end of their paper they observed that the abelian hypothesis might perhaps be replaced by the assumption that $H/N$ is nilpotent, while emphasizing that such a result would require additional work~\cite{ChenNavarro}. More recently, Navarro and Tiep proved the corresponding ordinary-character theorem for nilpotent $H/N$~\cite[Theorem~5.3(a)]{NavarroTiep}. A principal ingredient in their proof is their Carter-complement extension theorem~\cite[Theorem~5.2]{NavarroTiep}.

We prove the nilpotent version suggested by Chen and Navarro. The argument follows their Brauer-character reduction. The final step uses Navarro--Tiep's ordinary Carter-subgroup theorem~\cite[Theorem~5.2(b)]{NavarroTiep}. When the chief-factor prime differs from $\ell$, the relevant normal subgroup is an $\ell'$-group, and Lemma~\ref{lem:reduction} below shows directly that the ordinary character obtained from Navarro--Tiep has irreducible reduction modulo $\ell$.

\begin{theorem}\label{thm:main}
Let $G$ be a finite group, let $N\lhd G$, assume that $G/N$ is solvable, and let $N\le H\le G$ such that $H/N$ is nilpotent. If $\theta\in\IBr_\ell(H)$, then there exists $\chi\in\IBr_\ell(G)$ such that
\[
\theta\le \chi_H \qquad\text{and}\qquad \chi(1)\mid |G:H|\theta(1).
\]
\end{theorem}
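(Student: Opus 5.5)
We argue by induction on $|G:N|$, following the Chen--Navarro reduction, and fix a counterexample $(G,N,H,\theta)$ with $|G:N|$ minimal and, subject to that, $|G|$ minimal. If $H=G$ we take $\chi=\theta$, so we may assume $H<G$.

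\textbf{Step 1: reduction to a maximal normal subgroup.} Since $G/N$ is solvable and $H<G$, the product $HM$ is proper in $G$ for some $M\lhd G$ containing $N$ with $G/M$ a chief factor; if such an $M$ exists with $H\le M$ or $HM<G$ we can climb in two stages. First apply induction to $(HM, M\cap\cdots)$; more precisely, with $K=HM<G$ (note $K/M\cong H/(H\cap M)$ is nilpotent, and $K/N$ is solvable), induction applied to the triple $(K,N,H)$ gives $\psi\in\IBr_\ell(K)$ over $\theta$ with $\psi(1)\mid |K:H|\theta(1)$. Then $K/M$ is nilpotent with $M\lhd G$, and induction applied to $(G,M,K)$ gives $\chi$ over $\psi$ with $\chi(1)\mid|G:K|\psi(1)$. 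Since $|G:M|<|G:N|$ unless $M=N$, this is a legitimate step. Hence we may assume $G=HM$ for every such $M$, which forces $N$ itself to be maximal among normal subgroups below which $G/N$ is a chief factor not covered by $H$. In particular $G/N$ has a unique minimal normal subgroup $V/N$, which is an elementary abelian $p$-group, and $H$ complements it modulo $N$; more generally $H/N$ is a nilpotent self-normalizing subgroup playing the role of a Carter subgroup of $G/N$.

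\textbf{Step 2: Clifford reduction.} Let $\eta\in\IBr_\ell(N)$ lie under $\theta$ and let $T=I_G(\eta)$. Using the Clifford correspondence for Brauer characters (and the fact that $|G:T|$ and $|H:H\cap T|$ behave compatibly, since $T\cap H$ is the inertia group in $H$), if $T<G$ we pass to $(T,N,H\cap T)$ by induction and induce back; the divisibility follows from $\chi(1)=|G:T|\chi_T(1)$. So we may assume $\eta$ is $G$-invariant. Then by the theory of projective representations and a character triple isomorphism for Brauer characters (Navarro's book, Ch.~8), we replace $(G,N,\eta)$ by a triple in which $N$ is a central $\ell'$-subgroup. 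At this point $G/N$ is solvable and $H/N$ is a Carter subgroup of the solvable group $G/N$.

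\textbf{Step 3: the final step.} Now we distinguish by the prime $p$ of the chief factor $V/N$. If $p=\ell$, then $O_\ell(G/N)\ne1$ acts trivially on all irreducible Brauer characters, so $\theta$ and all candidate $\chi$ are characters of $G/V$ inflated, and since $HV=G$ we get $\chi=\theta$ viewed on $G=HV$ directly (restriction from $G/V$ to $H/(H\cap V)$ is an isomorphism). If $p\ne\ell$, then in the reduced situation the relevant normal subgroup $V$ is an $\ell'$-group; we lift $\theta$ to an ordinary character of $H$ (Fong--Swan in the solvable quotient / $\ell'$ setting), apply Navarro--Tiep's Carter-subgroup theorem \cite[Theorem~5.2(b)]{NavarroTiep} to get an ordinary $\hat\chi\in\Irr(G)$ lying over the lift with $\hat\chi(1)\mid|G:H|\theta(1)$, and then invoke Lemma~\ref{lem:reduction} to conclude that $\hat\chi^0$ is irreducible, giving the required $\chi$. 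I expect this last step to be the main obstacle: one must ensure the character triple reduction preserves both the nilpotent/Carter hypothesis and the divisibility, and that the ordinary theorem's output genuinely reduces irreducibly, which is precisely what Lemma~\ref{lem:reduction} is designed to handle.
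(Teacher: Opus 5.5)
Your architecture matches the paper's: Clifford reduction, reduction to $N$ central and $\ell'$, the two-stage induction forcing $G=VH$, and, for $p\ne\ell$, Navarro--Tiep's Theorem~5.2(b) combined with Lemma~\ref{lem:reduction}. The final step, however, has a genuine gap. You use Theorem~5.2(b) as if it returned ``some $\widehat\chi$ over the lift with $\widehat\chi(1)\mid|G:H|\theta(1)$'', but that is not what it gives. It needs an $H$-invariant $\eta\in\Irr(V\mid\widehat\nu)$ and an \emph{extension} $\widehat\theta$ of $\widehat\nu$ to $H$. It then returns a constituent $\widehat\chi$ of $\widehat\theta^{\,G}$ with $\widehat\chi_V=\eta$. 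That extension property is exactly the hypothesis of Lemma~\ref{lem:reduction}. An ordinary character that merely lies over $\widehat\theta$ with the right degree (what the ordinary Theorem~5.3(a) gives) can have reducible reduction mod $\ell$.

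You supply neither ingredient. First, since $N$ is central, $\theta_N=\theta(1)\nu$. So the lift extends $\widehat\nu$ only when $\theta$ is linear, and only then does $\eta(1)\mid|V:N|=|G:H|$ give the bound $|G:H|\theta(1)$. The paper therefore first reduces to $\theta(1)=1$, in these steps:
\begin{itemize}
\item Lemma~\ref{lem:monomial} writes $\theta=\tau^H$ with $\tau$ linear on some $N\le U<H$.
\item Since $VU\cap H=U$, we get $VU<G$, so induction applies to $(VU,N,U,\tau)$ and then to $(G,V,VU,\rho)$.
\item To pass from $\tau\le\chi_U$ to $\theta\le\chi_H$, it uses ordinary Frobenius reciprocity on the Hall $\ell'$-subgroups $S\le R$ of $U\le H$. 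Constituents of Brauer characters are not controlled by Frobenius reciprocity in general, so this needs $H=P\times R$ and $U=P\times S$.
\end{itemize}
Second, to get $\eta$ you need $p\nmid|H/N|$. This holds because $H/N$ acts faithfully (from $\core_G(H)=N$) and irreducibly on $V/N$, so a nontrivial normal Sylow $p$-subgroup of $H/N$ would centralize $V/N$. The coprime-action theorem \cite[Theorem~13.31]{Isaacs} then gives $\eta$. The degree bound comes from \cite[Corollary~11.29]{Isaacs} together with $V\cap H=N$.

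Your $p=\ell$ argument is wrong as written. $V$ contains $N$, on which $\nu$ is faithful, so neither $\theta$ nor any $\chi$ over $\nu$ is inflated from $G/V$. Only $O_\ell(G)$, not the preimage of $O_\ell(G/N)$, lies in the kernels of the relevant Brauer characters. The repair is easy and then simpler than the paper's appeal to Guralnick--Navarro. Write $V=N\times Q$ with $Q=O_\ell(V)\lhd G$. Then $V\cap H=N$ gives $Q\cap H=1$, so $G=Q\rtimes H$, and inflating $\theta$ along $G\to G/Q\cong H$ gives $\chi$ with $\chi_H=\theta$.

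Finally, Step~1 only asserts the structural facts you later use. You should run the two-stage reduction for every normal $M>N$, not just maximal ones; taking $M=\core_G(H)$ gives $\core_G(H)=N$. Then prove each of the following:
\begin{itemize}
\item $V\cap H=N$: it is normalized by $H$ and, since $V/N$ is abelian, by $V$.
\item $H$ is maximal in $G$, by irreducibility of the action on $V/N$.
\item $N_G(H)=H$.
\end{itemize}
The uniqueness of the minimal normal subgroup is not needed.
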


We separate two elementary points in order to make the passage between Brauer and ordinary characters explicit. The first is the relative monomial reduction needed when the subgroup becomes nilpotent. The second shows why, in the final $\ell'$-chief-factor case, the ordinary character supplied by Navarro--Tiep has irreducible reduction modulo $\ell$.

\section{Two preliminary lemmas}

\begin{lemma}\label{lem:monomial}
Let $X$ be a finite nilpotent group, let $Z\le Z(X)$ be an $\ell'$-subgroup, and let $\alpha\in\IBr_\ell(X)$. Assume that
\[
\alpha_Z=\alpha(1)\zeta
\]
for some $\zeta\in\IBr_\ell(Z)$. If $\alpha(1)>1$, then there exist $Z\le U<X$ and a linear $\lambda\in\IBr_\ell(U)$ such that $\lambda^X=\alpha$.
\end{lemma}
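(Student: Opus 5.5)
Write $X=P\times Q$, where $P$ is the Sylow $\ell$-subgroup of $X$ and $Q=O_{\ell'}(X)$ is its $\ell$-complement. Since $P$ is a normal $\ell$-subgroup, it lies in the kernel of every irreducible $\ell$-Brauer character. Hence $\IBr_\ell(X)$ is naturally identified with $\Irr(X/P)=\Irr(Q)$. Concretely, $\alpha$ is the inflation of some $\beta\in\Irr(Q)$ along $X\to X/P\cong Q$, and on $\ell$-regular elements $\alpha(xy)=\beta(y)$ for $x\in P$, $y\in Q$. Because $Z$ is an $\ell'$-subgroup we have $Z\le Q$, and $Z\le Z(Q)$. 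The hypothesis $\alpha_Z=\alpha(1)\zeta$ translates to $\beta_Z=\beta(1)\zeta$ with $\zeta\in\Irr(Z)$ linear, since $\IBr_\ell(Z)=\Irr(Z)$ for the $\ell'$-group $Z$. We also have $\beta(1)=\alpha(1)>1$.

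The plan is to work entirely with the ordinary character $\beta$ of the nilpotent $\ell'$-group $Q$. Nilpotent groups are M-groups, so $\beta$ is monomial. I want a monomial induction from a subgroup containing $Z$, and I would get it by the standard argument. Since $\beta(1)>1$, $\beta$ is not linear, and hence $Q/\ker\beta$ is nonabelian. Choose a normal subgroup $A\lhd Q$ containing $Z\ker\beta$ such that $A/\ker\beta$ is abelian but $A\not\le Z_\beta:=$ the preimage of the centre of $\beta$. One way to do this is to take $A/\ker\beta$ to be a maximal abelian normal subgroup of $Q/\ker\beta$. This subgroup properly contains $Z(Q/\ker\beta)\supseteq Z\ker\beta/\ker\beta$, because in a nonabelian nilpotent group a maximal abelian normal subgroup is self-centralizing and so is not central. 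Clifford theory then gives a constituent $\mu$ of $\beta_A$ whose inertia group $T$ is proper. The point is that $\beta_A$ cannot be homogeneous: a faithful irreducible character of $Q/\ker\beta$ that is homogeneous on the abelian normal subgroup $A/\ker\beta$ forces $A/\ker\beta$ to be central. The Clifford correspondent $\psi\in\Irr(T\mid\mu)$ satisfies $\psi^Q=\beta$, with $Z\le A\le T<Q$. By induction on $|Q|$ applied to $\psi$, or by monomiality of $\psi$ directly, we obtain $U_0$ with $Z\le U_0\le T$ and a linear $\nu\in\Irr(U_0)$ with $\nu^Q=\beta$. If $\psi$ is linear we take $U_0=T$. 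Otherwise we apply induction: $Z$ is central in $T$ and $\psi_Z$ is a multiple of $\zeta$ because $\beta_Z$ is. Alternatively, it suffices to note that any monomial realization $\nu^Q=\beta$ can be arranged with $U_0\supseteq Z$, by replacing $U_0$ with $U_0Z$. Since $Z$ is central, $\nu$ extends to $U_0Z$ by $\zeta$ on $Z$; this is compatible on $U_0\cap Z$ because $\nu_{U_0\cap Z}$ is a constituent of $\beta_{U_0\cap Z}=\beta(1)\zeta_{U_0\cap Z}$. Degree counting then forces $U_0Z=U_0$. This second route is cleaner, and it is the one I would use.

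Finally, I lift back to $X$. Set $U=PU_0$, which is a subgroup since $P$ is normal, and satisfies $Z\le U$. Moreover $|X:U|=|Q:U_0|=\beta(1)>1$, so $U<X$. Let $\lambda\in\IBr_\ell(U)$ be the inflation of $\nu$ through $U\to U/P\cong U_0$; it is linear. Induction of Brauer characters commutes with inflation from $X/P$: on $\ell$-regular elements, $\lambda^X$ is computed by the usual induction formula, and both sides are trivial on $P$. Hence $\lambda^X$ is the inflation of $\nu^Q=\beta$, which is $\alpha$.

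The main obstacle is the step of arranging $Z\le U_0$. The degree argument $|Q:U_0Z|\cdot 1\ge\beta(1)=|Q:U_0|$ gives $U_0Z=U_0$ only once we know $\nu$ extends to a linear character of $U_0Z$ inducing a character containing $\beta$. The hypothesis $\alpha_Z=\alpha(1)\zeta$ is exactly what guarantees that compatibility. In fact, for irreducible $\beta$ and central $Z$, the restriction $\beta_Z$ is automatically homogeneous, so the hypothesis is mild. Still, the compatibility check on $U_0\cap Z$ must be done carefully, using that $\nu$ is a constituent of $\beta_{U_0}$ by Frobenius reciprocity.
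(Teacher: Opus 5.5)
Your proof is correct and follows the paper's route. You split $X=P\times Q$, realize $\alpha$ as the inflation of $\beta\in\Irr(Q)$, write $\beta=\nu^Q$ with $\nu$ linear by monomiality of nilpotent groups, arrange $Z\le U_0$, and inflate back to $U=PU_0$.

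The only difference is how you force $Z\le U_0$. The paper notes that a central $z\notin S$ would give $\beta(z)=0\ne\beta(1)\zeta(z)$ by the induction formula. You use a degree count instead, and that count is complete once you add one step you left implicit. Any linear extension $\tilde\nu$ of $\nu$ to $U_0Z$ is a constituent of $\nu^{U_0Z}$, so $\tilde\nu^Q\le\nu^Q=\beta$, hence $\tilde\nu^Q=\beta$ and $|Q:U_0Z|=|Q:U_0|$. The compatibility with $\zeta$ only serves to write down one particular extension; it is not what makes the count work.
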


\begin{proof}
Write $X=P\times R$, where $P\in\Syl_\ell(X)$ and $R$ is the Hall $\ell'$-subgroup of $X$. Since every simple module of the $\ell$-group $P$ in characteristic $\ell$ is trivial, $P\le\ker(\alpha)$, and $\alpha$ is the inflation of an ordinary irreducible character $\beta\in\Irr(R)$. As $Z$ is an $\ell'$-subgroup of the center of $X$, we have $Z\le R$.

Since $R$ is nilpotent, it is supersolvable. By the classical monomiality theorem for supersolvable groups~\cite[Theorem~6.22]{Isaacs}, every irreducible character of $R$ is monomial. Hence
\[
\beta=\mu^R
\]
for some subgroup $S\le R$ and some $\mu\in\Lin(S)$. We claim that $Z\le S$. If not, choose $z\in Z\setminus S$. Since $z$ is central in $R$, no conjugate of $z$ lies in $S$, and the formula for an induced character gives $\beta(z)=0$. On the other hand,
\[
\beta_Z=\beta(1)\zeta,
\]
so $\beta(z)=\beta(1)\zeta(z)\ne0$, a contradiction. Thus $Z\le S$.

Set $U=P\times S$ and inflate $\mu$ to the linear Brauer character $\lambda$ of $U$ that is trivial on $P$. Then $Z\le U$ and $\lambda^X=\alpha$. Since $\alpha(1)>1$, the inducing subgroup is proper.
\end{proof}

\begin{lemma}\label{lem:reduction}
Let $K\le G$ be an $\ell'$-group and let $\eta\in\Irr(K)$. Suppose that $\widehat\chi\in\Irr(G)$ satisfies
\[
\widehat\chi_K=\eta.
\]
Then $\widehat\chi^{\,0}$ is an irreducible $\ell$-Brauer character of $G$.
\end{lemma}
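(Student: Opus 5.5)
The plan is to show that $\widehat\chi^{\,0}$ cannot be split into smaller Brauer characters, by comparing everything after restriction to $K$, where the theory modulo $\ell$ is the same as the ordinary one.

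\textbf{Step 1: Brauer characters of $K$.} Since $K$ is an $\ell'$-group, every element of $K$ is $\ell$-regular. By the Fong--Swan/Brauer theory for $\ell'$-groups (the group algebra is semisimple in characteristic $\ell$), the map $\psi\mapsto\psi^0$ is a bijection $\Irr(K)\to\IBr_\ell(K)$. In particular $\eta^0=\eta$, viewed on $\ell$-regular classes, lies in $\IBr_\ell(K)$.

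\textbf{Step 2: decompose $\widehat\chi^{\,0}$.} Write
\[
\widehat\chi^{\,0}=\sum_{\varphi\in\IBr_\ell(G)} d_\varphi\,\varphi
\]
with nonnegative integers $d_\varphi$. This is possible because $\widehat\chi^{\,0}$ is the Brauer character of the reduction of a $G$-stable lattice in a module affording $\widehat\chi$. Each $\varphi$ occurring is afforded by a nonzero module, so its restriction $\varphi_K$ is a nonzero Brauer character of $K$, that is, a nonzero sum of elements of $\IBr_\ell(K)$ with nonnegative coefficients. Restricting the decomposition to $K$, which commutes with reduction on $\ell$-regular elements, gives
\[
\sum_\varphi d_\varphi\,\varphi_K=(\widehat\chi_K)^0=\eta^0.
\]

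\textbf{Step 3: conclude.} By Step 1, $\eta^0$ is a single irreducible Brauer character of $K$. The irreducible Brauer characters of $K$ are linearly independent, so we can compare coefficients. The left side is then a nonnegative combination of nonzero Brauer characters of $K$ that sums to one irreducible. Hence exactly one $\varphi$ has $d_\varphi\neq0$, and for it $d_\varphi=1$. Therefore $\widehat\chi^{\,0}=\varphi\in\IBr_\ell(G)$.

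\textbf{Main obstacle.} The only real input is Step 1: irreducibility is preserved by reduction for an $\ell'$-group. This holds because $\ell\nmid|K|$, so every module is projective and the decomposition matrix of $K$ is the identity. The rest is bookkeeping. One point to check is that restriction to $K$ sends actual Brauer characters to actual Brauer characters with nonnegative coefficients; this is clear at the level of modules.
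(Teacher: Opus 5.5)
Your proposal is correct and follows essentially the same argument as the paper: you decompose $\widehat\chi^{\,0}$ into irreducible Brauer characters of $G$ and restrict to the $\ell'$-group $K$, where $\IBr_\ell(K)$ coincides with $\Irr(K)$. You then observe that a nonnegative integral combination of nonzero characters of $K$ equal to the single irreducible $\eta$ must consist of exactly one term with coefficient one.
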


\begin{proof}
Write
\[
\widehat\chi^{\,0}=\sum_i d_i\varphi_i,\qquad d_i>0,\quad \varphi_i\in\IBr_\ell(G).
\]
Every element of $K$ is $\ell$-regular, and hence
\[
\eta=(\widehat\chi^{\,0})_K=\sum_i d_i(\varphi_i)_K.
\]
For each $i$, the restriction $(\varphi_i)_K$ is a nonzero Brauer character of $K$ and, since $K$ is an $\ell'$-group, it is an ordinary character of $K$ with nonnegative integral irreducible multiplicities. The right side is therefore a nonnegative integral sum of ordinary irreducible characters of $K$, whereas the left side is the single irreducible character $\eta$. It follows that there is exactly one summand, with coefficient one, and that its restriction to $K$ is $\eta$. Hence $\widehat\chi^{\,0}$ is irreducible.
\end{proof}

\section{Proof of the theorem}

\begin{proof}[Proof of Theorem~\ref{thm:main}]
We follow the induction used by Chen and Navarro in the proof of~\cite[Theorem~A]{ChenNavarro}, replacing only the final ordinary-character step. We argue first on $|G:N|$ and, for fixed $|G:N|$, on $|G:H|$.

Choose $\nu\in\IBr_\ell(N)$ under $\theta$, and let $T=G_\nu$. Suppose first that $T<G$. Let $\epsilon\in\IBr_\ell(T\cap H)$ be the Clifford correspondent of $\theta$ over $\nu$. Since $(T\cap H)/N$ is a subgroup of the nilpotent group $H/N$, it is nilpotent, while $T/N$ is solvable. By induction there is $\xi\in\IBr_\ell(T)$ such that
\[
\epsilon\le \xi_{T\cap H},\qquad
\xi(1)\mid |T:T\cap H|\epsilon(1).
\]
Because $\xi$ lies over $\nu$, Brauer Clifford correspondence gives $\chi=\xi^G\in\IBr_\ell(G)$. Moreover $T\cap H=H_\nu$ and $\theta=\epsilon^H$. In the Mackey decomposition of $(\xi^G)_H$, the double coset $TH$ contributes $(\xi_{T\cap H})^H$, which contains $\epsilon^H=\theta$. Hence $\theta\le\chi_H$. Finally,
\[
\chi(1)=|G:T|\xi(1)
\mid |G:T\cap H|\epsilon(1)
=|G:H|\theta(1).
\]
Thus we may assume that $\nu$ is $G$-invariant.

Apply the Brauer character-triple reduction used in the proof of~\cite[Theorem~A]{ChenNavarro}. It replaces the triple by an isomorphic one in which
\[
N\le Z(G),\qquad \ell\nmid |N|,
\]
and $\nu$ is faithful. The isomorphism identifies all subgroups above $N$ through the common quotient and preserves the relevant character degrees, induction, and restriction. In particular the subgroup corresponding to $H$ has quotient isomorphic to $H/N$, so nilpotence is preserved.

We may also assume
\begin{equation}\label{eq:core}
\core_G(H)=N.
\end{equation}
Indeed, if $M=\core_G(H)>N$, then $M\lhd G$, $M\le H$, $G/M$ is solvable, and $H/M$ is a quotient of the nilpotent group $H/N$. The induction hypothesis applied to $(G,M,H,\theta)$ gives the theorem immediately, since $|G:M|<|G:N|$.

Since $N$ is central, it is abelian; hence the assumed solvability of $G/N$ also implies that $G$ is solvable. In particular $H$ is nilpotent: if $H/N$ has nilpotency class $c$, then $\gamma_{c+1}(H)\le N\le Z(H)$, and hence $\gamma_{c+2}(H)=1$.

Let $K/N$ be a chief factor of $G$. We may assume
\begin{equation}\label{eq:GKH}
G=KH.
\end{equation}
Indeed, suppose that $KH<G$. Since $KH/N$ is solvable and $H/N$ is nilpotent, induction applied to $(KH,N,H,\theta)$ gives $\tau\in\IBr_\ell(KH)$ with
\[
\theta\le\tau_H,\qquad \tau(1)\mid |KH:H|\theta(1).
\]
Now $G/K$ is solvable and
\[
KH/K\cong H/(H\cap K)
\]
is a quotient of $H/N$, hence nilpotent. A second application of induction, to $(G,K,KH,\tau)$, gives $\chi\in\IBr_\ell(G)$ with
\[
\tau\le\chi_{KH},\qquad \chi(1)\mid |G:KH|\tau(1).
\]
Then $\theta\le\chi_H$ and
\[
\chi(1)\mid |G:KH||KH:H|\theta(1)=|G:H|\theta(1),
\]
so the theorem follows. Thus, for the remaining case,~\eqref{eq:GKH} holds. Also
\begin{equation}\label{eq:intersection}
K\cap H=N.
\end{equation}
To see this, $K\cap H$ is normalized by $H$. If $x\in K\cap H$ and $k\in K$, then $[x,k]\in N$ because $K/N$ is abelian; since $N\le K\cap H$, we have $x^k\in K\cap H$. Thus $K\cap H\lhd G=KH$, and~\eqref{eq:core} yields~\eqref{eq:intersection}.

Write $K/N$ as an elementary abelian $q$-group. Since $K/N$ is a chief factor and $G=KH$, the action of $H/N$ on $K/N$ is irreducible. Its kernel is
\[
C_H(K/N)/N.
\]
Set $C=C_H(K/N)$. The subgroup $C$ is normalized by $H$. Also $[C,K]\le N$, so $K$ normalizes $C$. Hence $C\lhd G=KH$, and~\eqref{eq:core} implies $C=N$. Thus the action is faithful. If a Sylow $q$-subgroup $Q/N$ of the nilpotent group $H/N$ were nontrivial, then $Q/N\lhd H/N$. A nontrivial $q$-group acting on the $\mathbb F_q$-space $K/N$ has nonzero fixed points. Since $Q/N$ is normal in $H/N$, the fixed-point space $C_{K/N}(Q/N)$ is $H/N$-invariant. It is nonzero, so irreducibility forces $C_{K/N}(Q/N)=K/N$; hence $Q/N$ acts trivially, contrary to faithfulness. Thus
\begin{equation}\label{eq:coprime}
q\nmid |H/N|.
\end{equation}
This coprimeness is used below to choose an $H$-invariant character above $\widehat\nu$.

We next reduce to the case that $\theta$ is linear. Since $N\le Z(H)$ and $\nu$ lies under $\theta$,
\[
\theta_N=\theta(1)\nu.
\]
Suppose that $\theta(1)>1$. Lemma~\ref{lem:monomial} gives $N\le U<H$ and a linear $\tau\in\IBr_\ell(U)$ such that $\tau^H=\theta$. Since $N\le U$ and $K\cap H=N$, we have
\[
KU\cap H=U(K\cap H)=U.
\]
Thus $KU<G$, since otherwise $H=KU\cap H=U$, contrary to $U<H$. Apply induction first to the quadruple $(KU,N,U,\tau)$. Thus there is $\rho\in\IBr_\ell(KU)$ such that $\tau\le\rho_U$ and
\[
\rho(1)\mid |KU:U|.
\]
Apply induction again to the quadruple $(G,K,KU,\rho)$. Since
\[
KU/K\cong U/N
\]
is nilpotent and $G/K$ is solvable, there is $\chi\in\IBr_\ell(G)$ such that $\rho\le\chi_{KU}$ and
\[
\chi(1)\mid |G:KU|\rho(1).
\]
Since $\rho\le\chi_{KU}$ and $\tau\le\rho_U$, we have $\tau\le\chi_U$. We now use the special form of the subgroup $U$ supplied by Lemma~\ref{lem:monomial}. Write $H=P\times R$, where $P\in\Syl_\ell(H)$ and $R$ is the Hall $\ell'$-subgroup. In the proof of Lemma~\ref{lem:monomial}, one has $U=P\times S$ for some $S\le R$, and $\tau$ is the inflation of a linear character $\mu\in\Irr(S)$ such that $\mu^R=\beta$, where $\beta\in\Irr(R)$ corresponds to $\theta$. Since $S$ is an $\ell'$-group, the restriction $\chi_S$ is an ordinary character, and $\tau\le\chi_U$ implies $\mu\le\chi_S$. Ordinary Frobenius reciprocity now gives
\[
0< \langle\chi_S,\mu\rangle_S=\langle\chi_R,\mu^R\rangle_R=\langle\chi_R,\beta\rangle_R.
\]
Thus $\beta$ is a constituent of $\chi_R$. As the irreducible Brauer characters of $H=P\times R$ are precisely the inflations of the irreducible ordinary characters of $R$, it follows that $\theta\le\chi_H$. Moreover,
\[
\chi(1)\mid |G:KU||KU:U|=|G:H||H:U|=|G:H|\theta(1).
\]
This proves the theorem in this case. Hence from now on
\begin{equation}\label{eq:linear}
\theta(1)=1.
\end{equation}

Suppose first that $q=\ell$. Since $N\le Z(K)$ is an $\ell'$-Hall subgroup of $K$, Schur--Zassenhaus gives a Sylow $\ell$-subgroup $Q$ with
\[
K=N\times Q.
\]
Every irreducible $\ell$-Brauer character of $K=N\times Q$ has $Q$ in its kernel. Hence there is a unique one lying over $\nu$, namely the extension $\delta$ defined by $\delta(nq)=\nu(n)$ for $n\in N$ and $q\in Q$. Because $\nu$ is $G$-invariant and $K\lhd G$, this uniqueness makes $\delta$ $G$-invariant. The restriction correspondence of Guralnick--Navarro~\cite[Lemma~5.2]{GuralnickNavarro}, used in this form by Chen--Navarro in the proof of~\cite[Theorem~A]{ChenNavarro}, then gives a bijection
\[
\IBr_\ell(G\mid\delta)\longrightarrow\IBr_\ell(H\mid\nu).
\]
Thus some $\chi\in\IBr_\ell(G)$ satisfies $\chi_H=\theta$. Hence $\chi(1)=\theta(1)=1$, and therefore $\chi(1)\mid |G:H|\theta(1)$.

Assume therefore that
\begin{equation}\label{eq:qnotell}
q\ne\ell.
\end{equation}
Since $N$ is an $\ell'$-group, $K$ is then an $\ell'$-group.

Because $H$ is nilpotent and $\theta$ is linear, $\theta$ has a linear ordinary lift. Explicitly, write $H=P\times R$, with $P\in\Syl_\ell(H)$ and $R$ the Hall $\ell'$-subgroup. The Brauer character $\theta$ is trivial on $P$ and corresponds to a linear ordinary character of $R$; extending it trivially across $P$ gives
\[
\widehat\theta\in\Lin(H),\qquad \widehat\theta^{\,0}=\theta.
\]
Put $\widehat\nu=\widehat\theta_N$. Since $N$ is an $\ell'$-group, $\widehat\nu$ is the ordinary linear character corresponding to $\nu$. Also $\widehat\nu$ is $H$-invariant, because $N\lhd H$ and $\widehat\theta$ is linear.

At this point we use the same coprime-action step as Navarro--Tiep. By~\eqref{eq:coprime}, the $q$-group $K/N$ and the group $H/N$ have coprime orders. Since $\widehat\nu$ is $H$-invariant, the coprime-action invariant-character theorem~\cite[Theorem~13.31]{Isaacs} yields an $H$-invariant character
\[
\eta\in\Irr(K\mid\widehat\nu).
\]
This is exactly the choice made in the proof of~\cite[Theorem~5.3(a)]{NavarroTiep}. Since $G=KH$, the $H$-invariance of $\eta$ makes $\eta$ $G$-invariant.

We claim that $H$ is maximal in $G$. Let $H\le L\le G$. Since $G=KH$, every element of $L$ can be written with its $H$-part in $L$, and hence
\[
L=H(L\cap K).
\]
Now $(L\cap K)/N$ is $H/N$-invariant in $K/N$. By irreducibility it is either trivial or all of $K/N$, so $L=H$ or $L=G$. Thus $H$ is maximal in $G$. If $H\lhd G$, then~\eqref{eq:core} gives $H=N$. Since then $H/N=1$ is abelian, the result follows directly from Chen--Navarro~\cite[Theorem~A]{ChenNavarro}. We may therefore assume that $H$ is not normal. Since $H$ is maximal in $G$, the strict inclusion $H<N_G(H)$ would force $N_G(H)=G$ and hence $H\lhd G$, a contradiction. Therefore $N_G(H)=H$. As $N\lhd G$ and $N\le H$, this gives
\[
N_{G/N}(H/N)=H/N.
\]
Thus $H/N$ is a self-normalizing nilpotent subgroup, i.e. a Carter subgroup, of the solvable group $G/N$.

We now have all the hypotheses of Navarro--Tiep~\cite[Theorem~5.2(b)]{NavarroTiep}: $G$ is solvable, $G=KH$, $K\cap H=N$, the section $K/N$ is abelian, $H/N$ is nilpotent and self-normalizing in $G/N$, $\eta\in\Irr(K)$ is $H$-invariant, and $\widehat\nu\in\Irr(N)$ is an $H$-invariant constituent of $\eta_N$. We apply that theorem with $L=N$ and with the extension $\widehat\theta\in\Irr(H)$ of $\widehat\nu$. The theorem yields an irreducible constituent
\[
\widehat\chi\in\Irr(G)
\]
of $\widehat\theta^{\,G}$ which extends $\eta$. Therefore
\begin{equation}\label{eq:extension}
\widehat\chi_K=\eta,\qquad \widehat\chi(1)=\eta(1).
\end{equation}
Since $N\lhd K$ and $\eta\in\Irr(K\mid\widehat\nu)$, the standard subnormal degree-divisibility theorem~\cite[Corollary~11.29]{Isaacs} gives
\[
\frac{\eta(1)}{\widehat\nu(1)}\mid |K:N|.
\]
As $\widehat\nu$ is linear and $\widehat\chi$ extends $\eta$, we obtain
\begin{equation}\label{eq:degree}
\eta(1)=\widehat\chi(1)\mid |K:N|.
\end{equation}
By Lemma~\ref{lem:reduction},
\[
\chi:=\widehat\chi^{\,0}\in\IBr_\ell(G)
\]
is irreducible. Since $\widehat\chi$ is a constituent of $\widehat\theta^{\,G}$, ordinary Frobenius reciprocity gives $\widehat\theta\le\widehat\chi_H$. Reducing on $\ell$-regular elements and using $\widehat\theta^{\,0}=\theta$ gives
\[
\theta\le\chi_H.
\]
Finally, by~\eqref{eq:intersection}, \eqref{eq:linear}, \eqref{eq:extension}, and~\eqref{eq:degree},
\[
\chi(1)=\widehat\chi(1)=\eta(1)\mid |K:N|=|G:H|=|G:H|\theta(1).
\]
This completes the proof.
\end{proof}

\section*{Acknowledgements}
This work was partially supported by a grant from the Simons Foundation (\#918096, to YY).

\section*{Disclosure Statement}
The authors declare that they have no competing interests and no conflicts of interest.

\section*{Data Availability Statement}
Data sharing is not applicable to this article, as no data sets were generated or analysed during the current study.

\bigskip
\noindent Department of Mathematics, Texas State University, 601 University Drive, San Marcos, TX 78666, USA

\noindent Email address: \texttt{yang@txstate.edu}


\begin{thebibliography}{99}

\bibitem{ChenNavarro}
X. Chen and G. Navarro,
\emph{Brauer characters, degrees and subgroups},
Bull. Lond. Math. Soc. \textbf{54} (2022), 891--893.

\bibitem{GuralnickNavarro}
R. Guralnick and G. Navarro,
\emph{Real constituents of permutation characters},
J. Algebra \textbf{607} (2022), 315--337.

\bibitem{Isaacs}
I. M. Isaacs,
\emph{Character Theory of Finite Groups},
AMS Chelsea Publishing, Providence, RI, 2006.

\bibitem{NavarroTiep}
G. Navarro and P. H. Tiep,
\emph{Degrees and constituents of characters},
J. Algebra \textbf{689} (2026), 423--453.

\end{thebibliography}
\end{document}